\theoremstyle{plain}
\newtheorem{thm}{Theorem}[section]
\newtheorem{thma}{Theorem}
\newtheorem{prop}[thm]{Proposition}
\theoremstyle{definition}
\begin{document}
\baselineskip 6.1mm

\title
{Divisibility of the class numbers of imaginary quadratic fields}
\author{K. Chakraborty, A. Hoque, Y. Kishi, P. P. Pandey}

\address[K. Chakraborty]{Harish-Chandra Research Institute, HBNI, Chhatnag Road, Jhunsi, Allahabad-211019, India.}
\email{kalyan@hri.res.in}
\address[A. Hoque]{Harish-Chandra Research Institute, HBNI, Chhatnag Road, Jhunsi, Allahabad-211019, India.}
\email{azizulhoque@hri.res.in}
\address[Y. Kishi]{Department of Mathematics, Aichi University of Education, 1 Hirosawa Igaya-cho, Kariya, Aichi 448-8542, Japan.}
\email{ykishi@auecc.aichi-edu.ac.jp}
\address[P. P. Pandey]{Department of Mathematics, IISER Berhampur, Berhampur-760010,
Odisha, India.}
\email{prem.p2506@gmail.com}

\subjclass[2010]{11R11; 11R29}

\date{\today}

\keywords{Imaginary Quadratic Field; Class Number; Ideal Class Group}

\begin{abstract}
For a given odd integer $n>1$, we provide some families of imaginary quadratic number fields
of the form $\mathbb{Q}(\sqrt{x^2-t^n})$ whose ideal class group has a subgroup
isomorphic to $\mathbb{Z}/n\mathbb{Z}$.
\end{abstract}

\maketitle{}

\section{Introduction}
The divisibility properties of the class numbers of number fields are very important
for understanding the structure of the ideal class groups of number fields.
For a given integer $n>1$, the Cohen-Lenstra heuristic \cite{CL84} predicts that a
positive proportion of imaginary quadratic number fields have class number divisible by $n$.
Proving this heuristic seems out of reach with the current state of knowledge.
On the other hand, many families of (infinitely many) imaginary quadratic fields
with class number divisible by $n$ are known.
Most of such families are of the type $\mathbb{Q}(\sqrt{x^2-t^n})$ or of the type
$\mathbb{Q}(\sqrt{x^2-4t^n})$, where $x$ and $t$ are positive integers with some 
restrictions (for the former see
\cite{AC55, IT11p, IT11, KI09, RM97, RM99, NA22, NA55, SO00, MI12},
and for the later see \cite{Cohn, GR01, IS11, IT15, LO09, YA70}).
Our focus in this article will be on the family $K_{t,x}=\mathbb{Q}(\sqrt{x^2-t^n})$.

In 1922, T. Nagell~\cite{NA22} proved that for an odd integer $n$,
the class number of imaginary quadratic field $K_{t,x}$
is divisible by $n$ if $t$ is odd, $(t,x)=1$,
and $q\mid x$, $q^2\nmid x$ for all prime divisors $q$ of $n$.
Let $b$ denote the square factor of $x^2-t^n$, that is,
$x^2-t^n=b^2d$, where $d<0$ is the square-free part of $x^2-t^n$.
Under the condition $b=1$, N. C. Ankeny and S. Chowla \cite{AC55}
(resp.\ M. R. Murty \cite[Theorem~1]{RM97})
considered the family $K_{3,x}$ (resp.\ $K_{t,1}$).
M. R. Murty also treated the family $K_{t,1}$ with $b<t^{n/4}/2^{3/2}$ (\cite[Theorem~2]{RM97}).
Moreover, K. Soundararajan \cite{SO00} (resp.\ A. Ito \cite{IT11p})
treated the family $K_{t,x}$ under the condition that
$b<\sqrt{(t^n-x^2)/(t^{n/2}-1)}$ holds (resp.\ all of divisors of $b$ divide $d$).
On the other hand, T. Nagell~\cite{NA55} (resp.\ Y. Kishi~\cite{KI09}, A. Ito~\cite{IT11}
and M. Zhu and T. Wang~\cite{AC55}) studied the family $K_{t,1}$
(resp.\ $K_{3,2^k}$, $K_{p,2^k}$ and $K_{t,2^k}$)
unconditionally for $b$, where $p$ is an odd prime.
In the present paper, we consider the case when both
$t$ and $x$ are odd primes and $b$ is unconditional and prove the following:

\begin{thm}\label{T1}
Let $n\geq 3$ be an odd integer and $p,q$ be distinct odd primes with $q^2<p^n$.
Let $d$ be the square-free part of $q^2-p^n$. Assume that $q \not \equiv \pm 1 \pmod {|d|}$. Moreover, we assume
$p^{n/3}\not= (2q+1)/3, (q^2+2)/3$ whenever both $d \equiv 1 \pmod 4$ and $3\mid n$.
Then the class number of $K_{p,q}=\mathbb{Q}(\sqrt{d})$ is divisible by $n$.
\end{thm}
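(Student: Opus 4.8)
The plan is to run a Nagell‑type argument producing an ideal $\mathfrak p$ with $\mathfrak p^n$ principal but with $[\mathfrak p]$ of order exactly $n$ in the class group. Write $b\in\mathbb Z_{>0}$ with $q^2-p^n=b^2d$ and set $\alpha=q+b\sqrt d\in\mathcal O_K$, so $N(\alpha)=q^2-b^2d=p^n$. First I would fix the factorization of $(\alpha)$. Since $p\mid d$ would force $p\mid q^2-p^n$, hence $p\mid q$, we have $p\nmid d$, so $p$ is unramified; and $p$ is not inert, for otherwise $N(\alpha)=p^n$ would be an even power of $p$, against $n$ odd. Hence $(p)=\mathfrak p\bar{\mathfrak p}$ with $\mathfrak p\neq\bar{\mathfrak p}$ and $N(\mathfrak p)=p$. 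As $p\nmid\alpha$ in $\mathcal O_K$ (again this would give $p\mid q$), the primes $\mathfrak p,\bar{\mathfrak p}$ cannot both divide $(\alpha)$; comparing norms gives $(\alpha)=\mathfrak p^n$ up to swapping $\mathfrak p,\bar{\mathfrak p}$. Then $[\mathfrak p]^n=1$, so it suffices to prove $[\mathfrak p]$ has order exactly $n$, which produces a subgroup $\mathbb Z/n\mathbb Z$ of $\mathrm{Cl}(K)$ and hence $n\mid h(K)$.

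To show the order is $n$, I would argue by contradiction: otherwise $\mathfrak p^{n/\ell}=(\gamma)$ is principal for some prime $\ell\mid n$, so $(\gamma^\ell)=(\alpha)$. The hypotheses exclude $|d|\in\{1,3\}$ (for $|d|=1$ the condition $q\not\equiv\pm1\pmod{|d|}$ is vacuous; $|d|=3$ forces $q=3$, whence $3\mid p^n$, impossible), so $d\notin\{-1,-3\}$, $\mathcal O_K^\times=\{\pm1\}$, and — $\ell$ being odd — I may take $\gamma$ with $\gamma^\ell=\alpha$ and $N(\gamma)=p^{n/\ell}$. Writing $\gamma=\tfrac{u+c\sqrt d}{2}$ with $u\equiv c\pmod 2$ gives $u^2+c^2|d|=4p^{n/\ell}$; comparing rational parts in $\gamma^\ell=\alpha$ and using that $\ell$ is odd (so every monomial of $\operatorname{Re}\big((u+c\sqrt d)^{\ell}\big)$ carries a factor $u$) yields $u\mid 2^\ell q$, and likewise $c\mid 2^\ell b$. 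Consequently either $\gamma=u_0+c_0\sqrt d\in\mathbb Z[\sqrt d]$ with $u_0\mid q$, so $u_0\in\{\pm1,\pm q\}$ and $u_0^2+c_0^2|d|=p^{n/\ell}$; or $d\equiv1\pmod4$ and $\gamma=\tfrac{u+c\sqrt d}{2}$ with $u,c$ odd and $u\in\{\pm1,\pm q\}$.

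The core of the proof is then to eliminate each of these cases by substituting the value of $u$ (or $u_0$) into the two coefficient identities from $\gamma^\ell=\alpha$, together with the norm relation and — crucially — the fact that $d$ is the \emph{squarefree} part of $q^2-p^n$, which forces divisibility by $|d|$ of the relevant quantities. If $u_0=\pm1$, then $c_0^2|d|=p^{n/\ell}-1$, so $c_0^2d\equiv0\pmod{|d|}$, and reducing the rational‑part identity modulo $|d|$ gives $q\equiv\pm1\pmod{|d|}$ — contrary to hypothesis. If $u_0=\pm q$, then $c_0^2|d|=p^{n/\ell}-q^2$ and the rational‑part identity forces $\sum_{i}\binom{\ell}{2i}q^{\ell-1-2i}(c_0^2d)^i=\pm1$, hence a polynomial relation between $q$ and $p^{n/\ell}$; for $\ell=3$ it reads $4q^2-3p^{n/3}=\pm1$, i.e.\ $p^{n/3}=(4q^2\mp1)/3$, which is never a prime power (for the minus sign it factors as $\tfrac{2q\mp1}{3}\cdot(2q\pm1)$ into coprime integers $>1$, for the plus sign it is not an integer), and for $\ell\ge5$ one must check the resulting equation has no admissible solutions. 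In the half‑integer cases I would put $z=2\gamma$, so $z\bar z=4p^{n/\ell}$ and $z+\bar z=2u$, and study the integer sequence $L_k=z^k+\bar z^k$ satisfying the linear recurrence with those parameters; from $z^\ell=2^\ell\gamma^\ell=2^\ell\alpha$ one gets $L_\ell=\pm 2^{\ell+1}q$, so $2^{\ell+1}\mid L_\ell$, whereas a $2$‑adic computation forces $v_2(L_\ell)=\ell$ for odd $\ell\ge5$ — a contradiction — while for $\ell=3$ one is driven to $p^{n/3}=(2q+1)/3$ (when $u=\pm1$) or $p^{n/3}=(q^2+2)/3$ (when $u=\pm q$, the companion value $(q^2-2)/3$ being non‑integral), precisely the cases excluded in the statement. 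Having ruled out every case, $[\mathfrak p]$ has order $n$ and the theorem follows.

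I expect the main obstacle to be Steps 3–4, and particularly the primes $\ell\ge5$: one must establish the $2$‑adic valuation identity $v_2(L_\ell)=\ell$ for the Lucas‑type sequences above (note this pattern fails at $\ell=3$, where $v_2(L_3)\ge4=\ell+1$ — which is exactly why the exceptional values are tied to $p^{n/3}$), and verify that the Diophantine equations coming from the $u_0=\pm q$ sub‑case admit no further solutions. By contrast, establishing $(\alpha)=\mathfrak p^n$ and the reduction to ``order exactly $n$'' should be routine; the delicate part is the bookkeeping across the four sub‑cases and confirming that $q\not\equiv\pm1\pmod{|d|}$, together with the exclusion of the two listed values of $p^{n/3}$, really suffices to kill every branch.
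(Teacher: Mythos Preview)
Your overall architecture matches the paper's: factor $(\alpha)=\mathfrak p^{\,n}$, use $\mathcal O_K^\times=\{\pm1\}$ (from $d<-3$) to reduce to showing $\alpha$ is not an $\ell$-th power for any prime $\ell\mid n$, and split according to whether $\gamma$ lies in $\mathbb Z[\sqrt d]$ or is a genuine half-integer. Your half-integer analysis is essentially the paper's Proposition~2.1, though the paper's formulation is simpler than your Lucas packaging: since $d\equiv5\pmod8$ the prime $2$ is inert, and with $u,c$ odd the residue of $\gamma=(u+c\sqrt d)/2$ in $\mathcal O_K/2\mathcal O_K\cong\mathbb F_4$ is a primitive cube root of unity, while $\alpha=q+m\sqrt d$ reduces to $1$ there (as $m$ is even); hence $\gamma^\ell=\alpha$ forces $3\mid\ell$, so $\ell=3$, and your $\ell=3$ endgame then agrees with the paper's Cases~(2A)--(2D).

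The genuine gap is exactly where you flag it: the integer case $\gamma=u_0+c_0\sqrt d$ with $u_0=\pm q$. Your polynomial identity is correct, but there is no elementary way to exclude solutions for $\ell\ge5$, and the paper does \emph{not} attack that equation. Instead it observes that $N(\gamma)=p^{n/\ell}$ gives $(-d)\,c_0^{\,2}+q^2=p^{n/\ell}$, while by construction $(-d)\,m^2+q^2=p^{\,n}$; thus $(x,y)=(|c_0|,n/\ell)$ and $(m,n)$ are two \emph{distinct} positive solutions of the generalized Ramanujan--Nagell equation $D_1x^2+D_2=p^{\,y}$ with $D_1=-d$, $D_2=q^2$, $\lambda=1$. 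The key external input is the Bugeaud--Shorey theorem, which bounds the number of such solutions by one outside an explicit finite list $\mathcal E$ and three parametric families $\mathcal F,\mathcal G_1,\mathcal H_1$. The paper then checks these: $\mathcal E$ and $\mathcal G_1$ are excluded because here $D_1>3$ and $D_2=q^2$ with $q$ an odd prime; $\mathcal F$ is excluded by Cohn's theorem that the only perfect squares in the Lucas sequence are $L_1=1$ and $L_3=4$, so $L_{k+\varepsilon}=q^2$ is impossible; and membership in $\mathcal H_1$ forces $(2q-1)(2q+1)=3p^{\,r}$, hence $2q-1\in\{1,3\}$, impossible for an odd prime $q$. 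This single appeal to Bugeaud--Shorey handles all primes $\ell\mid n$ uniformly in this sub-case, rendering your ad~hoc $\ell=3$ factorization unnecessary and supplying precisely the ingredient your plan is missing for $\ell\ge5$.
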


In Table 1 (respectively Table 2), we list $K_{p,q}$ for small values of $p,q$ for $n=3$ (respectively for $n=5$).
It is readily seen from these tables that the assumptions in Theorem \ref{T1} hold very often. We can easily prove, by reading modulo $4$, that the condition \enquote{$p^{n/3}\not= (2q+1)/3, (q^2+2)/3$} in Theorem \ref{T1} holds whenever $p \equiv 3 \pmod 4$.
Further, if we fix an odd prime $q$, then the condition \enquote{$q \not\equiv \pm 1 \pmod{|d|}$} in Theorem \ref{T1} holds almost always, and,
this can be proved using the celebrated Siegel's theorem on integral points on affine curves. More precisely, we prove the following theorem in this direction.

\begin{thm}\label{T2}
Let $n\geq 3$ be an odd integer not divisible by $3$. For each odd prime $q$ the class number of $K_{p,q}$ is divisible by $n$ for all but finitely many $p$'s. Furthermore, for each $q$ there are infinitely many fields $K_{p,q}$. 
\end{thm}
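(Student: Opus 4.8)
The plan is to derive Theorem~\ref{T2} from Theorem~\ref{T1} together with Siegel's theorem on integral points of affine curves. Fix an odd prime $q$. Since $3\nmid n$, the clause \enquote{$p^{n/3}\ne(2q+1)/3,(q^2+2)/3$ whenever $d\equiv1\pmod4$ and $3\mid n$} in Theorem~\ref{T1} is vacuous, so the conclusion $n\mid h(K_{p,q})$ holds for every odd prime $p\ne q$ satisfying $q^2<p^n$ and $q\not\equiv\pm1\pmod{|d_p|}$, where $d_p<0$ denotes the square-free part of $q^2-p^n$. The primes that fail to be odd, or equal $q$, or satisfy $p^n\le q^2$, form a finite set; hence it suffices to show that
\[
\mathcal{E}_q:=\bigl\{\,p\ \text{prime}\ :\ q\equiv\pm1\pmod{|d_p|}\,\bigr\}
\]
is finite, and --- for the last statement --- that $p\mapsto\mathbb{Q}(\sqrt{d_p})$ takes infinitely many values.

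The key reduction is elementary. Write $p^n-q^2=|d_p|\,b_p^2$ with $b_p\in\mathbb{Z}_{>0}$. If $q\equiv\pm1\pmod{|d_p|}$ then $|d_p|$ divides $q-1$ or $q+1$, so $|d_p|\le q+1$; in particular $|d_p|$ lies in the finite set $M_q$ of square-free positive integers in $[1,q+1]$. Now fix $m\in M_q$. Every prime $p$ with $|d_p|=m$ yields an integral point $(p,b_p)$ on the affine curve
\[
C_m:\qquad m\,Y^2=X^n-q^2.
\]
Because $q\ne0$, the polynomial $X^n-q^2$ is separable of degree $n$, and $n\ge5$ since $n$ is odd, at least $3$, and not divisible by $3$; hence $C_m$ is nonsingular and its smooth projective model is hyperelliptic of genus $(n-1)/2\ge2$. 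By Siegel's theorem $C_m$ has only finitely many integral points, so only finitely many primes $p$ satisfy $|d_p|=m$. Taking the union over the finitely many $m\in M_q$ shows $\mathcal{E}_q$ is finite, and together with Theorem~\ref{T1} this gives the first assertion.

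For the second assertion, suppose to the contrary that $\{\mathbb{Q}(\sqrt{d_p}):p\ \text{prime}\}$ is finite. Then a single negative square-free integer $d$ satisfies $d_p=d$ for infinitely many primes $p$, and for each such $p$ the point $(p,b_p)$ lies on $C_{|d|}$. These points have pairwise distinct $X$-coordinates as $p$ varies, so $C_{|d|}$ has infinitely many integral points, again contradicting Siegel's theorem (its genus is $(n-1)/2\ge1$). Hence infinitely many distinct fields $K_{p,q}$ occur.

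The only substantive input is Siegel's theorem; the rest is bookkeeping. I expect the two points to be careful about in the write-up to be the implication \enquote{$q\equiv\pm1\pmod{|d_p|}\ \Rightarrow\ |d_p|\le q+1$}, which is exactly what bounds the number of auxiliary curves $C_m$ one must control, and the genus computation for $C_m$ (where $n$ odd and $n\ge3$ enter, the hypothesis $3\nmid n$ serving only to discard the extra clause of Theorem~\ref{T1}).
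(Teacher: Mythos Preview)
Your argument is correct and follows essentially the same route as the paper: reduce via Theorem~\ref{T1} to the condition $q\equiv\pm1\pmod{|d_p|}$, bound $|d_p|\le q+1$, and then invoke Siegel's theorem on the finitely many curves $mY^2=X^n-q^2$ (equivalently $DX^2+q^2=Y^n$) to get finiteness both of the exceptional primes and of the primes yielding any fixed discriminant. Your write-up is in fact more explicit than the paper's about the genus (separability of $X^n-q^2$ and $n\ge5$ giving genus $(n-1)/2\ge2$), whereas the paper simply asserts irreducibility and positive genus with a reference; otherwise the two proofs coincide.
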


\section{Preliminaries}
In this section we mention some results which are needed for the proof of the Theorem \ref{T1}. First we state a basic result from algebraic number theory.

\begin{prop}\label{P1}
Let $d \equiv 5 \pmod 8$ be an integer and $\ell$ be a prime. For odd integers $a,b$ we have 
$$\left(\frac{a+b\sqrt{d}}{2}\right)^{\ell} \in \mathbb{Z}[\sqrt{d}] \mbox{ if and only if } \ell=3.$$
\end{prop}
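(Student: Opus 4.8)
The plan is to pass to the residue field of $\mathbb{Q}(\sqrt{d})$ at the prime $2$, where the whole question collapses to a triviality about $\mathbb{F}_4^{\times}$. Since $d\equiv 5\pmod 8$ we have in particular $d\equiv 1\pmod 4$, so the ring of integers of $\mathbb{Q}(\sqrt{d})$ is $\mathcal{O}=\mathbb{Z}[\omega]$ with $\omega=(1+\sqrt{d})/2$, and the minimal polynomial of $\omega$ is $f(X)=X^2-X+(1-d)/4$. The hypothesis $d\equiv 5\pmod 8$ (rather than merely $d\equiv 1\pmod 4$) is exactly what makes $(1-d)/4$ odd, so $f$ reduces modulo $2$ to $X^2+X+1$, which is irreducible over $\mathbb{F}_2$; hence $2$ is inert and $\mathcal{O}/2\mathcal{O}\cong\mathbb{F}_4$. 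First I would record two easy consequences. One: because $a$ and $b$ are odd, $(a-b)/2\in\mathbb{Z}$, so $\alpha:=(a+b\sqrt{d})/2=(a-b)/2+b\,\omega$ lies in $\mathcal{O}$. Two: since $\sqrt{d}=2\omega-1$ we have $\mathbb{Z}[\sqrt{d}]=\mathbb{Z}+2\mathbb{Z}\omega$, which contains $2\mathcal{O}$ and has index $2$ in $\mathcal{O}$; therefore the image of $\mathbb{Z}[\sqrt{d}]$ under the reduction $\mathcal{O}\to\mathcal{O}/2\mathcal{O}=\mathbb{F}_4$ is precisely the prime subfield $\mathbb{F}_2$. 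Consequently, for every $\beta\in\mathcal{O}$ one has $\beta\in\mathbb{Z}[\sqrt{d}]$ if and only if $\beta\bmod 2\mathcal{O}$ lies in $\mathbb{F}_2$.

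Next I would compute $\bar\alpha:=\alpha\bmod 2\mathcal{O}$. Writing $c=(a-b)/2\in\mathbb{Z}$ and using that $b$ is odd, $\bar\alpha=\bar c+\bar\omega$ with $\bar c\in\{0,1\}$, so $\bar\alpha\in\{\bar\omega,\bar\omega+1\}=\mathbb{F}_4\setminus\mathbb{F}_2$. In particular $\bar\alpha$ is a non-identity element of the cyclic group $\mathbb{F}_4^{\times}$ of order $3$, hence $\bar\alpha$ has order exactly $3$. Since reduction is a ring homomorphism, $\overline{\alpha^{\ell}}=\bar\alpha^{\,\ell}$, so by the criterion above $\alpha^{\ell}\in\mathbb{Z}[\sqrt{d}]$ if and only if $\bar\alpha^{\,\ell}\in\mathbb{F}_2$; and as $\bar\alpha^{\,\ell}\ne 0$, this holds if and only if $\bar\alpha^{\,\ell}=1$, i.e.\ if and only if $3\mid\ell$. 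Because $\ell$ is prime, $3\mid\ell$ is equivalent to $\ell=3$, which is the claim. (If one wants the implication $\ell=3\Rightarrow\alpha^3\in\mathbb{Z}[\sqrt{d}]$ spelled out without invoking the criterion: $\bar\alpha^3=1$ means $\alpha^3=1+2\gamma$ for some $\gamma\in\mathcal{O}$, and $1+2\gamma\in\mathbb{Z}+2\mathcal{O}\subseteq\mathbb{Z}[\sqrt{d}]$.)

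I do not expect a real obstacle; the argument is short once the right viewpoint is in place. The only points that need a little care are the computation that $2$ is inert (equivalently, that $(1-d)/4$ is odd, which pins down precisely where the stronger congruence $d\equiv 5\pmod 8$ is used) and the identification of the image of $\mathbb{Z}[\sqrt{d}]$ in $\mathcal{O}/2\mathcal{O}$ with $\mathbb{F}_2$. A purely elementary alternative would be to expand $(a+b\sqrt{d})^{\ell}=A+B\sqrt{d}$ by the binomial theorem and chase the $2$-adic valuations of $A$ and $B$ (equivalently, of the coefficient of $\omega$ in $\alpha^\ell$), which encodes the same phenomenon but is noticeably more computational; I would therefore present the residue-field argument above.
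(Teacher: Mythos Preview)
Your argument is correct and is a clean, fully articulated version of what the paper merely hints at with the one-line proof ``This can be easily proved by taking modulo some power of two.'' Passing to $\mathcal{O}/2\mathcal{O}\cong\mathbb{F}_4$ and reading off the order of $\bar\alpha$ in $\mathbb{F}_4^{\times}$ is precisely the conceptual way to package that hint; the elementary binomial-expansion alternative you mention at the end is presumably closer to what the authors had in mind, but yours is shorter and more transparent. One cosmetic remark: the proposition only assumes $d\equiv 5\pmod 8$, not that $d$ is square-free, so $\mathbb{Z}[\omega]$ need not literally be the full ring of integers of $\mathbb{Q}(\sqrt{d})$; however, your proof uses nothing beyond the ring computation $\mathbb{Z}[\omega]/2\mathbb{Z}[\omega]\cong\mathbb{F}_2[X]/(X^2+X+1)\cong\mathbb{F}_4$ and the identification $\mathbb{Z}[\sqrt{d}]=\mathbb{Z}+2\mathbb{Z}\omega$, so the phrase ``ring of integers'' can simply be dropped without affecting anything (and in the paper's application $d$ is square-free anyway).
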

\begin{proof}
This can be easily proved by taking modulo some power of two.
\end{proof}

We now recall a result of Y. Bugeaud and T. N. Shorey \cite{BS01} on Diophantine equations which is one of the main ingredient in the proof of Theorem \ref{T1}.
Before stating the result of Y. Bugeaud and T. N. Shorey, we need to introduce some definitions and notations.

Let $F_k$ denote the $k$th term in the Fibonacci sequence defined by $F_0=0, \  F_1= 1$
and $F_{k+2}=F_k+F_{k+1}$ for $k\geq 0$. Similarly $L_k$ denotes the $k$th term in the Lucas
sequence defined by $L_0=2, \ L_1=1$ and $L_{k+2}=L_k+L_{k+1}$ for $k\geq 0$.
For $\lambda\in \{1, \sqrt{2}, 2\}$, we define the subsets $\mathcal{F}, \ \mathcal{G_\lambda},
\ \mathcal{H_\lambda}\subset \mathbb{N}\times\mathbb{N}\times\mathbb{N}$ by
\begin{align*}
\mathcal{F}&:=\{(F_{k-2\varepsilon},L_{k+\varepsilon},F_k)\,|\,
k\geq 2,\varepsilon\in\{\pm 1\}\},\\
\mathcal{G_\lambda}&:=\{(1,4p^r-1,p)\,|\,\text{$p$ is an odd prime},r\geq 1\},\\
\mathcal{H_\lambda}&:=\left\{(D_1,D_2,p)\,\left|\,
\begin{aligned}
&\text{$D_1$, $D_2$ and $p$ are mutually coprime positive integers with $p$}\\
&\text{an odd prime and there exist positive integers $r$, $s$ such that}\\
&\text{$D_1s^2+D_2=\lambda^2p^r$ and $3D_1s^2-D_2=\pm\lambda^2$}
\end{aligned}\right.\right\},
\end{align*}
except when $\lambda =2$, in which case the condition ``odd''
on the prime $p$ should be removed in the definitions of $\mathcal{G_\lambda}$
and $\mathcal{H_\lambda}$. 

\begin{thma}\label{A1}
Given $\lambda\in \{1, \sqrt{2}, 2\}$, a prime $p$ and positive co-prime integers $D_1$ and $D_2$, the number of positive integer solutions $(x, y)$ of the Diophantine equation
 \begin{equation}\label{E1}
  D_1x^2+D_2=\lambda^2p^y
 \end{equation}
is at most one except for $$
(\lambda,D_1,D_2,p)\in\mathcal{E}:=\left\{\begin{aligned}
&(2,13,3,2),(\sqrt 2,7,11,3),(1,2,1,3),(2,7,1,2),\\
&(\sqrt 2,1,1,5),(\sqrt 2,1,1,13),(2,1,3,7)
\end{aligned}\right\}
$$
and $(D_1, D_2, p)\in
\mathcal{F}\cup \mathcal{G_\lambda}\cup \mathcal{H_\lambda}$.
\end{thma}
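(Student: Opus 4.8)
The plan is to follow the standard route for equations of generalized Ramanujan--Nagell type: convert the statement into an assertion about binary recurrence (Lehmer) sequences, and then invoke the theory of primitive divisors together with lower bounds for linear forms in logarithms. Suppose \eqref{E1} has two distinct solutions $(x_1,y_1)$ and $(x_2,y_2)$; since $D_1>0$, equal exponents would force $x_1=x_2$, so we may take $y_1<y_2$. First I would pass to the imaginary quadratic field $K=\mathbb{Q}(\sqrt{-D_1D_2})$: multiplying \eqref{E1} by $D_1$ and factoring $(D_1x_i)^2+D_1D_2=D_1\lambda^2p^{y_i}$ in $\mathcal{O}_K$ attaches to each solution an algebraic integer $\beta_i$ of norm $D_1\lambda^2p^{y_i}$ (essentially $\beta_i=D_1x_i+\sqrt{-D_1D_2}$). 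The coprimality of $D_1$ and $D_2$ lets us reduce to the case where $p$ splits in $K$, say $p\mathcal{O}_K=\mathfrak{p}\bar{\mathfrak{p}}$, and shows that $\mathfrak{p},\bar{\mathfrak{p}}$ cannot both divide $\beta_i$; hence $(\beta_i)=\mathfrak{a}_i\mathfrak{p}^{y_i}$, where $\mathfrak{a}_i$ is one of the finitely many ideals of norm $D_1\lambda^2$, and after a harmless normalization we may assume $\mathfrak{a}_1=\mathfrak{a}_2$.

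Next I would eliminate $\mathfrak{p}$ and the $\mathfrak{a}_i$ between the two solutions. Passing to the quotients $\beta_i/\bar{\beta}_i$, which lie on the unit circle, and combining the two relations produces a unit identity in $\mathcal{O}_K$ which amounts to saying that a suitable term of the Lehmer sequence attached to the Lehmer pair $\bigl(\sqrt{D_1}\,x_1+\sqrt{-D_2},\ \sqrt{D_1}\,x_1-\sqrt{-D_2}\bigr)$ --- whose norm is $\lambda^2p^{y_1}$ --- is forced to equal $\pm 1$, $\pm\lambda^2$, or a prime power $p^r$, the index of that term being governed by $y_2/\gcd(y_1,y_2)$. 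These three shapes are precisely what is built into the defining relations of $\mathcal{H_\lambda}$, into the family $\mathcal{G_\lambda}$, and --- via the classification of defective Lehmer sequences --- into $\mathcal{F}$. To conclude, I would invoke the primitive divisor theorem of Bilu--Hanrot--Voutier: for index $m>30$, a Lehmer number has a prime factor dividing no earlier term, which is incompatible with its being $\pm 1$, $\pm\lambda^2$ or $p^r$ unless the underlying sequence is one of the finitely many defective ones. Hence, outside $\mathcal{F}\cup\mathcal{G_\lambda}\cup\mathcal{H_\lambda}$, the index is bounded; combining this with explicit lower bounds for linear forms in two logarithms to cap $y_2$ in terms of $y_1$, a large but finite search over the remaining ranges then isolates exactly the sporadic quadruples in $\mathcal{E}$.

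The conceptual skeleton --- passing to $K$ and reducing to a Lehmer sequence --- is routine. The hard part will be the final step: making the primitive-divisor input mesh cleanly with the nuisance factor $D_1\lambda^2$ (carefully bookkeeping the ideals $\mathfrak{a}_i$ and, when $\lambda\in\{\sqrt 2,2\}$, the behaviour at the prime $2$) so that one recovers exactly the three families $\mathcal{F}$, $\mathcal{G_\lambda}$, $\mathcal{H_\lambda}$ rather than a longer list, and then carrying out the quantitative linear-forms-in-logarithms estimates and the ensuing finite computation that pin down the exceptional set $\mathcal{E}$. It is precisely this effective part that makes a self-contained argument lengthy, which is why we quote the result directly from Bugeaud and Shorey \cite{BS01}.
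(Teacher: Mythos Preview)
The paper does not prove this statement at all: Theorem~A is simply quoted from Bugeaud and Shorey \cite{BS01} as an external input, with no argument given. Your proposal ultimately does the same thing --- you end by quoting \cite{BS01} --- and the sketch you supply (factor in the imaginary quadratic field, reduce to a Lehmer sequence, apply the Bilu--Hanrot--Voutier primitive divisor theorem, and finish with linear forms in logarithms plus a finite search) is indeed the strategy of the original Bugeaud--Shorey paper, so there is nothing to correct.
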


We recall the following result of J. H. E. Cohn \cite{Cohn1} about appearance of squares in the Lucas sequence.
\begin{thma}\label{A2}
The only perfect squares appearing in the Lucas sequence are $L_1=1$ and $L_3=4$.
\end{thma}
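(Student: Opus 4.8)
The plan is to prove this by elementary $2$-adic and quadratic-residue arguments, after recording the standard identities. Writing $\alpha=(1+\sqrt5)/2$ and $\beta=(1-\sqrt5)/2$, so that $L_k=\alpha^k+\beta^k$ and $\alpha\beta=-1$, I would first note the three identities
\begin{align*}
L_{2m}&=L_m^2-2(-1)^m,\\
L_{3m}&=L_m^3-3(-1)^mL_m,\\
L_{a+b}&=L_aL_b-(-1)^bL_{a-b},
\end{align*}
each of which follows immediately from the Binet form. The first step is to dispose of even indices: if $n=2m$ with $m\geq 1$, then $L_n=L_m^2\pm 2$, so $L_n=y^2$ would force $(y-L_m)(y+L_m)=\pm 2$; but the two factors differ by $2L_m$ and so have the same parity, whence their product is odd or divisible by $4$, never $\pm2$. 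Since $L_0=2$ is not a square either, any square $L_n$ must have $n$ odd.

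For odd $n$ I would reduce modulo $4$, where $(L_k\bmod 4)$ is periodic of period $6$ with values $2,1,3,0,3,3$; as squares are $\equiv 0,1\pmod 4$, an odd $n$ with $L_n$ a square satisfies $n\equiv 1$ or $3\pmod 6$. The case $3\mid n$ is then handled cleanly by the tripling identity: writing $n=3m$ with $m$ odd (forced, since $n$ is odd), one gets $L_n=L_m(L_m^2+3)$, and since $3\nmid L_m$ for $m$ odd the two factors are coprime. Hence $L_n=\square$ forces the factor $L_m^2+3$ to be a square, i.e. $(y-L_m)(y+L_m)=3$, whence $L_m=1$, $m=1$ and $n=3$. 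This recovers exactly $L_3=4$, with no appeal to induction.

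The remaining and hardest case is $n\equiv 1\pmod 6$ with $n>1$. Here I would first use that $(L_k\bmod 8)$ has period $12$ with values $2,1,3,4,7,3,2,5,7,4,3,7$; as squares are $\equiv 0,1,4\pmod 8$, the residue class $n\equiv 7\pmod{12}$ (giving $L_n\equiv 5$) is excluded, leaving $n\equiv 1\pmod{12}$. For such $n$, set $k$ equal to the $2$-part of $(n-1)/2$ and $j=(n-1)/(2k)$, so that $k=2^b$ with $b\geq 1$, $j$ is odd, and $n=1+2kj$. Iterating the addition identity (which, since $k$ is even, flips the sign at each step) yields $L_{x+2kj}\equiv(-1)^jL_x\pmod{L_k}$, so here $L_n\equiv -L_1=-1\pmod{L_{2^b}}$. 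Since $2^b\equiv 2,4\pmod 6$ for $b\geq 1$, the mod-$4$ table gives $L_{2^b}\equiv 3\pmod 4$, so $L_{2^b}$ carries a prime factor $\equiv 3\pmod 4$ and $-1$ is a quadratic non-residue modulo $L_{2^b}$. As $L_n\equiv -1$ is automatically coprime to $L_{2^b}$, this contradicts $L_n$ being a square, leaving only $n=1$ with $L_1=1$.

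The main obstacle is precisely this last case: unlike the divisibility arguments for even $n$ and for $3\mid n$, it requires producing, for every surviving index, an auxiliary modulus against which $L_n$ is a quadratic non-residue. The device above — choosing the modulus $L_{2^b}$ adapted to the $2$-adic valuation of $n-1$ and exploiting $L_{2^b}\equiv 3\pmod 4$ — is what collapses the infinitely many classes $n\equiv 1\pmod{12}$ to the single solution $n=1$; the verifications of the periodicities mod $4$ and mod $8$ and of the sign-flip congruence are the routine but essential bookkeeping behind it.
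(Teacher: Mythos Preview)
The paper does not actually prove this statement: it is quoted as a known result of Cohn \cite{Cohn1} and invoked as a black box, so there is no ``paper's proof'' to compare against.

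Your argument is correct. The even-index case via $L_{2m}=L_m^2\mp 2$ and the parity of $(L_m\pm y)$ is clean; the case $3\mid n$ via $L_{3m}=L_m(L_m^2+3)$ with $\gcd(L_m,3)=1$ for odd $m$ is fine; and the crucial case $n\equiv 1\pmod{12}$, $n>1$, is handled exactly as it should be: from $L_{a+k}+L_{a-k}=L_aL_k$ for even $k$ one gets $L_{x+2k}\equiv -L_x\pmod{L_k}$, hence $L_n\equiv(-1)^jL_1\equiv -1\pmod{L_{2^b}}$ with $j$ odd, and $L_{2^b}\equiv 3\pmod 4$ supplies a prime $p\equiv 3\pmod 4$ modulo which $-1$ is a non-residue. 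The coprimality remark ensuring $p\nmid L_n$ is the right way to close the argument.

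It is worth noting that what you have written is, in essence, Cohn's original 1964 proof: the same reduction to odd indices, the same use of $L_{2^b}$ as an auxiliary modulus via the identity $L_{m+n}+(-1)^nL_{m-n}=L_mL_n$, and the same quadratic-non-residue contradiction from $L_{2^b}\equiv 3\pmod 4$. So your proposal does not differ in strategy from the cited source; it simply supplies the details the paper omits.
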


\section{Proofs}
We begin with the following crucial proposition. 

\begin{prop}\label{P2}
Let $n,q,p,d$ be as in Theorem \ref{T1} and let $m$ be the positive integer with
$q^2-p^n=m^2d$. Then the element $\alpha =q+m\sqrt{d}$ is not an $\ell^{th}$ power of an element in the ring of integers of $K_{p,q}$ for any prime divisor $\ell $ of $n$.
\end{prop}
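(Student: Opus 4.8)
The plan is to argue by contradiction. Suppose $\alpha = \beta^\ell$ with $\beta \in \mathcal{O}_K$ for some prime $\ell \mid n$; as $n$ is odd, $\ell \geq 3$. Since $q^2 - p^n = m^2 d$, the norm of $\alpha$ is $N(\alpha) = q^2 - m^2 d = p^n$, so $N(\beta)^\ell = p^n$ and $N(\beta) = p^{n/\ell}$ (norms in an imaginary quadratic field are positive); in particular $\beta$ is neither $0$ nor a unit. I would organise the argument according to the shape of $\beta$ in $\mathcal{O}_K$.

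First suppose $d \equiv 1 \pmod 4$ and $\beta = (a+b\sqrt d)/2$ with $a$ and $b$ both odd. Because $\beta+\bar\beta = a$ and $\beta\bar\beta = p^{n/\ell}$ are odd, the sequence $P_k := \beta^k+\bar\beta^k$ satisfies a recurrence whose reduction modulo $2$ is the Fibonacci recurrence, so $P_k$ is even exactly when $3 \mid k$; since $P_\ell = 2q$ is even, this forces $3\mid \ell$, i.e.\ $\ell = 3$ (this is the content of Proposition \ref{P1}). For $\ell = 3$, expanding $\beta^3$ and equating rational parts gives $a(a^2+3b^2d)=8q$, hence $a \in \{\pm1,\pm q\}$ since $a$ is odd and $q$ is prime; substituting into $N(\beta)=(a^2-b^2d)/4 = p^{n/3}$ and using $d<0$ to resolve signs yields $3p^{n/3}=2q+1$ when $a=\pm1$, and $3p^{n/3}=q^2\mp2$ when $a=\pm q$. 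As $q^2 \not\equiv 2 \pmod 3$, the value $(q^2-2)/3$ is never an integer, leaving $p^{n/3}\in\{(2q+1)/3,\,(q^2+2)/3\}$, both excluded by hypothesis (here $3\mid n$ and $d\equiv1\pmod4$). Thus from now on $\beta = a+b\sqrt d$ with $a,b \in \mathbb{Z}$ — which is automatic when $d \equiv 2,3 \pmod 4$.

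Writing $\beta = a + b\sqrt d$ with $a,b \in \mathbb{Z}$ and equating the rational parts of $\beta^\ell$ and $q+m\sqrt d$, one gets $q = aU$ with $U = \sum_k \binom{\ell}{2k}a^{\ell-1-2k}(b^2d)^k \in \mathbb{Z}$, so $a \in \{\pm1,\pm q\}$ since $q$ is prime; also $N(\beta)=a^2+|d|b^2=p^{n/\ell}$, and $b \neq 0$ (otherwise $m=0$ and $q^2=p^n$, impossible). If $a = \pm1$, put $c := |d|b^2 = p^{n/\ell}-1$; then $U \equiv 1 \pmod c$ (all terms with $k\geq1$ vanish), so $q = aU \equiv a \equiv \pm1 \pmod c$, and since $|d| \mid c$ this contradicts $q \not\equiv \pm1 \pmod{|d|}$. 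If $a=\pm q$, then $|d|b^2 + q^2 = p^{n/\ell}$, so $(|b|,n/\ell)$ and $(m,n)$ are two distinct positive integer solutions of $|d|X^2 + q^2 = p^Y$; since $q \nmid d$ (else $q \mid p^n$, forcing $q=p$), Theorem \ref{A1} with $\lambda=1$ applies and forces $(|d|,q^2,p)\in \mathcal{F}\cup \mathcal{G}_1\cup \mathcal{H}_1$ (the only member of $\mathcal{E}$ with $\lambda=1$ would require $q^2=1$). But $\mathcal{G}_1$ needs $|d|=1$, impossible because $q\not\equiv\pm1\pmod{|d|}$ forces $|d|>1$; $\mathcal{F}$ needs $q^2$ to be a Lucas number, hence $q^2\in\{1,4\}$ by Theorem \ref{A2}, impossible for an odd prime; and $(|d|,q^2,p)\in \mathcal{H}_1$ would give $3p^r = 4q^2\pm1$, yet $(4q^2+1)/3 \notin \mathbb{Z}$, while $(4q^2-1)/3 = (2q-1)(2q+1)/3$, when an integer, is a product of two coprime integers each larger than $1$, hence not a prime power. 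Every branch ends in a contradiction, proving the proposition.

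I expect the delicate points to be two. First, the sign analysis in the half-integer $\ell=3$ case must be carried out carefully to land on exactly $(2q+1)/3$ and $(q^2+2)/3$; this is precisely what forces the somewhat unusual extra hypothesis of Theorem \ref{T1}. Second, eliminating the families $\mathcal{F},\mathcal{G}_1,\mathcal{H}_1$ in the case $a=\pm q$ requires Theorem \ref{A2} together with a handful of congruence and coprimality checks. By contrast, the two structural observations that make everything work — that the case $a=\pm1$ is exactly the negation of $q\equiv\pm1\pmod{|d|}$, and that the case $a=\pm q$ produces a second solution of the Bugeaud--Shorey equation \eqref{E1} — are also what single out the hypotheses of Theorem \ref{T1} as the natural ones.
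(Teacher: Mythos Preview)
Your proof is correct and follows essentially the same route as the paper: split into the half-integer versus integer cases for $\beta$, use Proposition~\ref{P1} (which you recast via the recurrence $P_k=\beta^k+\bar\beta^k$) to force $\ell=3$ in the former, reduce to $a\in\{\pm1,\pm q\}$ in both cases, and in the $a=\pm q$ branch produce a second solution of \eqref{E1} to invoke Theorem~\ref{A1} and eliminate $\mathcal{F},\mathcal{G}_1,\mathcal{H}_1$ via Theorem~\ref{A2} and the factorisation $(2q-1)(2q+1)=3p^r$. The only cosmetic differences are that you compress the four sign subcases in the half-integer analysis into two lines and you make the coprimality check $q\nmid d$ explicit, which the paper leaves implicit.
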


\begin{proof}
Let $\ell$ be a prime divisor of $n$. Since $n$ is odd, so is $\ell$.

We first consider the case when $d \equiv 2 \mbox{ or }3 \pmod 4$.
If $\alpha$ is an $\ell^{th}$ power, then there are integers $a,b$ such that 
$$q+m \sqrt{d}=\alpha =(a+b\sqrt{d})^{\ell}.$$
Comparing the real parts, we have
$$q=a^{\ell}+\sum_{i=0}^{(\ell-1)/2} \binom{\ell}{2i} a^{\ell-2i}b^{2i}d^i.$$
This gives $a\mid q$ and hence $a=\pm q$ or $a=\pm 1$.\\
Case (1A): $a= \pm q$.\\
We have $q+m\sqrt{d}=(\pm q+b\sqrt{d})^{\ell}$. Taking norm on both sides we obtain
$$p^n=(q^2-b^2d)^{\ell}.$$
Writing $D_1=-d>0$, we obtain
\begin{equation*}
D_1b^2+q^2=p^{n/ \ell}.
\end{equation*}
Also, we have 
\begin{equation*}
D_1m^2+q^2=p^n.
\end{equation*}
As $\ell$ is a prime divisor of $n$ so $(x,y)=(|b|,n/ \ell)$ and $(x,y)=(m,n)$ are
distinct solutions of (\ref{E1}) in positive integers for $D_1=-d>0,D_2=q^2, \lambda=1$.

Now we verify that $(1, D_1,D_2,p) \not \in \mathcal{E}$ and $(D_1,D_2,p) \not \in \mathcal{F}\cup \mathcal{G_\lambda}\cup \mathcal{H_\lambda}$. This will give a contradiction.
Clearly $(1,D_1,D_2,p) \not \in \mathcal{E}$. Further, as $D_1>3$, we see that $(D_1,D_2,p) \not \in \mathcal{G}_1$. From Theorem \ref{A2}, we see that $(D_1,D_2,p) \not \in \mathcal{F}$. Finally, if $(D_1,D_2,p) \in \mathcal{H}_1$ then there are positive integers $r,s$ such that 
\begin{equation}\label{eq:1}
3D_1s^2-q^2=\pm 1
\end{equation}
and
\begin{equation}\label{eq:2}
D_1s^2+q^2=p^r.
\end{equation}
By (\ref{eq:1}), we have $q\not= 3$, and hence we have $3D_1s^2-q^2=- 1$.
From this together with (\ref{eq:2}), we obtain
$$4q^2=3p^r+1,$$
that is $$(2q-1)(2q+1)=3p^r.$$
This leads to $2q-1=1 \mbox{ or } 2q-1=3$, but this is not possible as $q$ is an odd prime. Thus $(D_1,D_2,p) \not \in \mathcal{H}_1$.\\
Case (1B): $a=\pm 1$.\\
In this case we have $q+m\sqrt{d}=(\pm 1+b \sqrt{d})^{\ell}$.

Comparing the real parts on both sides, we get $q \equiv \pm 1 \pmod{|d|}$ which contradicts to the assumption \enquote{$q \not\equiv \pm 1 \pmod{|d|}$}.

Next we consider the case when $d \equiv 1 \pmod 4$.
If $\alpha$ is an $\ell^{th}$ power of some integer in $K_{p,q}$,
then there are rational integers $a,b$ such that
$$q+m\sqrt{d}=\left( \frac{a+b\sqrt{d}}{2} \right)^{\ell},\ \ a\equiv b\pmod 2.$$
In case both $a \mbox{ and }b$ are even, then we can proceed as in the case
$d \equiv 2 \mbox{ or }3 \pmod 4$ and obtain a contradiction under the assumption
$q \not\equiv \pm 1 \pmod{|d|}$. Thus we can assume that both $a$ and $b$ are odd.
Again, taking norm on both sides we obtain
\begin{equation}\label{E4}
4p^{n/\ell}=a^2-b^2d.
\end{equation}

Since $a,b$ are odd and $p \neq 2$, reading modulo 8 in (\ref{E4}) we get

$d \equiv 5 \pmod 8$. As $\left( \frac{a+b\sqrt{d}}{2} \right)^{\ell}=q+m\sqrt{d} \in \mathbb{Z}[\sqrt{d}]$, by Proposition \ref{P1} we obtain $\ell=3$.
Thus we have
$$q+m\sqrt{d}=\left( \frac{a+b\sqrt{d}}{2} \right)^3.$$
Comparing the real parts, we have
\begin{align}\label{E44}
8q&=a(a^2+3b^2d).
\end{align}
Since $a$ is odd, therefore, we have $a\in\{\pm 1,\pm q\}$.\\
Case (2A): $a=q$.\\
By (\ref{E44}), we have $8=q^2+3b^2d$, and hence, $2\equiv q^2\pmod{3}$.
This is not possible.\\
Case (2B): $a=-q$.\\
By (\ref{E4}) and (\ref{E44}), we have
$$4p^{n/3}=q^2-b^2d\ \text{and}\ 8=-(q^2+3b^2d).$$
From these, we have $3p^{n/3}=q^2+2$, which violates our assumption.\\
Case (2C): $a=1$.\\
By (\ref{E44}) and $d<0$, we have $8q=1+3b^2d<0$.
This is not possible.\\
Case (2D): $a=-1$.\\
By (\ref{E4}) and (\ref{E44}), we have
$$4p^{n/3}=1-b^2d\ \text{and}\ 8q=-(1+3b^2d).$$
From these, we have $3p^{n/3}=2q+1$, which violates our assumption.
This completes the proof.
\end{proof}

We are now in a position to prove Theorem \ref{T1}.

\begin{proof}[\bf Proof of Theorem~$\ref{T1}$]
Let $m$ be the positive integer with $q^2-p^n=m^2d$ and put $\alpha =q+m\sqrt{d}$.
We note that $\alpha$ and $\bar{\alpha}$ are co-prime and $N(\alpha)=\alpha \bar{\alpha}=p^n$.
Thus we get $(\alpha)= \mathfrak{a}^n$ for some integral ideal $\mathfrak{a}$ of $K_{p,q}$.
We claim that the order of $[\mathfrak{a}]$ in the ideal class group of $K_{p, q} $ is $n$.
If this is not the case, then we obtain an odd prime divisor $\ell$ of $n$ and an integer
$\beta $ in $K_{p,q}$ such that $(\alpha)=(\beta)^{\ell}$.
As $q$ and $p$ are distinct odd primes, the condition \enquote{$q \not \equiv \pm 1 \pmod{|d|}$}
ensures that $d<-3$. Also $d$ is square-free, hence the only units in the ring of integers of
$K_{p,q}=\mathbb{Q}(\sqrt{d})$ are $\pm1$. Thus we have $\alpha =\pm \beta^{\ell}$.
Since $\ell$ is odd, therefore, we obtain $\alpha= \gamma^{\ell}$ for some integer
$\gamma$ in $K_{p,q}$ which contradicts to Proposition \ref{P2}.
\end{proof}

We now give a proof of Theorem \ref{T2}. This is obtained as a consequence of a well known theorem of Siegel (see \cite{ES, LS}).

\begin{proof}[\bf Proof of Theorem $\ref{T2}$]
Let $n>1$ be as in Theorem \ref{T2} and $q$ be an arbitrary odd prime. For each odd prime $p \neq q$, from Theorem \ref{T1}, the class number of $K_{p,q}$ is divisible by $n$ unless $q\equiv \pm 1 \pmod {|d|}$.
If $q\equiv \pm 1 \pmod {|d|}$, then $|d|\leq q+1$. 

For any positive integer $D$, the curve 
\begin{equation}\label{E5}
DX^2+q^2=Y^n
\end{equation}
is an irreducible algebraic curve (see \cite{WS}) of genus bigger than $0$. From Siegel's theorem (see \cite{LS}) it follows that there are only finitely many integral points $(X,Y)$ on the curve (\ref{E5}). Thus, for each $d<0$ there are at most finitely many primes $p$ such that 
$$q^2-p^n=m^2d.$$
Since $K_{p,q}=\mathbb{Q}(\sqrt{d})$, it follows that there are infinitely many fields $K_{p,q}$ for each odd prime $q$. Further if $p$ is large enough, then for $q^2-p^n=m^2d$, we have $|d|>q+1$. Hence, by Theorem \ref{T1}, the class number of $K_{p,q}$ is divsible by $n$ for $p$ sufficiently large.
\end{proof}

\section{Concluding remarks}
We remark that the strategy of the proof of Theorem \ref{T1} can be adopted, together with the following result of W. Ljunggren \cite{LJ43}, to prove Theorem \ref{T4}. 
\begin{thma}\label{TE}
For an odd integer $n$, the only solutions to the Diophantine equation
\begin{equation}
\frac{x^n-1}{x-1}=y^2  
\end{equation}
in positive integers $x,y, n $ with $x>1$ is $n=5, x=3, y=11$.
\end{thma}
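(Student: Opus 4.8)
My plan is to stratify the argument by the exponent $n$: small $n$ yield to elementary estimates, and every larger odd $n$ reduces to the case of a prime exponent, which carries the real difficulty. For $n=3$ the equation reads $x^{2}+x+1=y^{2}$, and since $x^{2}<x^{2}+x+1<(x+1)^{2}$ for every $x\ge 1$ there is no solution with $x>1$. For $n=5$ the equation is $x^{4}+x^{3}+x^{2}+x+1=y^{2}$; multiplying by $4$ one computes $4y^{2}-(2x^{2}+x)^{2}=3x^{2}+4x+4>0$ and $(2x^{2}+x+1)^{2}-4y^{2}=(x-3)(x+1)$, so for $x\ge 4$ the integer $4y^{2}$ is squeezed strictly between the consecutive squares $(2x^{2}+x)^{2}$ and $(2x^{2}+x+1)^{2}$, which is impossible; the values $x=2$ (giving $31$) and $x=3$ (giving $121=11^{2}$) are checked by hand and yield exactly the solution recorded in the statement.

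For composite odd $n$ I would argue by strong induction. Write $n=p\,n_{1}$ with $p$ prime and $n_{1}>1$, and use the factorization $\frac{x^{n}-1}{x-1}=\frac{(x^{n_{1}})^{p}-1}{x^{n_{1}}-1}\cdot\frac{x^{n_{1}}-1}{x-1}$ together with the classical fact that $\gcd\!\left(\frac{z^{p}-1}{z-1},\,z-1\right)$ divides $p$; thus the two factors are coprime unless $p\mid y$. In the coprime case both factors are perfect squares, so in particular $\frac{x^{n_{1}}-1}{x-1}$ is a square with $n_{1}$ odd and $n_{1}<n$, whence by the induction hypothesis either $n_{1}=1$ (so $n$ is prime and the reduction is complete) or $n_{1}=5$ and $x=3$. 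The latter forces $\frac{3^{5p}-1}{3^{5}-1}$ to be a square, equivalently $2y^{2}=3^{5p}-1$, which for primes $p\ge 3$ is excluded by the classical determination of the solutions of $2Y^{2}=3^{m}-1$. The remaining case $p\mid y$ is disposed of by a $p$-adic valuation count (using that $p$ exactly divides $\frac{z^{p}-1}{z-1}$ whenever it divides it at all), which leaves sub-equations $\frac{z^{p}-1}{z-1}=ps^{2}$ and $\frac{x^{n_{1}}-1}{x-1}=p^{2c+1}t^{2}$ of a similar shape, treated in the same way. This reduces everything to proving that $\frac{x^{p}-1}{x-1}=y^{2}$ has no solution with $x>1$ for every prime $p\ge 7$.

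For the prime exponent---the heart of the theorem---I would follow Ljunggren: when $x$ is a perfect square one argues directly with the resulting equation in fewer variables, and otherwise one passes to a suitable real quadratic or quartic field built from $x$ and $p$, where eliminating $y$ produces a Pell equation $u^{2}-x v^{2}=\pm 1$ coupled with a congruence linking $(u,v)$ to $p$; a descent comparing an arbitrary solution with the fundamental solution of this Pell equation then forces $p\le 5$. A modern variant replaces the descent by an explicit lower bound for a linear form in two logarithms, which bounds $p$, after which the finitely many surviving exponents are eliminated computationally (or via sharp results on equations $D_{1}X^{2}+D_{2}=Y^{p}$). This last step is the genuine obstacle: everything before it is elementary squeezing between consecutive squares plus congruence and valuation bookkeeping, whereas the non-solvability of $\frac{x^{p}-1}{x-1}=y^{2}$ for primes $p\ge 7$ requires either Ljunggren's delicate Pell-equation descent or nontrivial Diophantine input.
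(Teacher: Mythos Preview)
The paper does not prove this statement at all: Theorem~C is quoted verbatim from Ljunggren \cite{LJ43} and used as a black box in the concluding remarks. There is therefore no ``paper's own proof'' to compare your attempt against.

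That said, your outline is architecturally sound and is essentially how a self-contained proof would be organised. The squeezing arguments for $n=3$ and $n=5$ are correct, and the composite-to-prime reduction via the factorisation $\frac{x^{pn_1}-1}{x-1}=\frac{z^p-1}{z-1}\cdot\frac{x^{n_1}-1}{x-1}$ with $z=x^{n_1}$ and the bound $\gcd\bigl(\frac{z^p-1}{z-1},z-1\bigr)\mid p$ is the standard route. Two places are thinner than the rest: in the subcase $n_1=5$, $x=3$ you do not need the side result on $2Y^2=3^m-1$, since the induction hypothesis applied to the other factor $\frac{243^p-1}{242}$ (exponent $p<n$, base $243\neq 3$) already gives a contradiction; and in the $p\mid y$ subcase the residual equations $\frac{z^p-1}{z-1}=ps^2$ and $\frac{x^{n_1}-1}{x-1}=p^{2c+1}t^2$ are not literally ``of the same shape'' and require their own (known) treatment, which you gesture at but do not supply. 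Most importantly, you correctly flag that the prime exponent $p\ge 7$ is the real content, and you then defer to Ljunggren's Pell-descent or to linear forms in logarithms---so in the end your proposal, like the paper, rests on citing Ljunggren for the decisive step rather than reproving it.
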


\begin{thm}\label{T4}
For any positive odd integer $n$ and any odd prime $p$, the class number of the imaginary quadratic field $\mathbb{Q}(\sqrt{1-p^n})$ is divisible by $n$ except for the case $(p, n)=(3, 5)$.  
\end{thm}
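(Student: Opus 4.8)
The plan is to transplant the arguments of Proposition~\ref{P2} and Theorem~\ref{T1} to the (non-prime) value $q=1$, where most of the case analysis collapses, and to isolate the single place where primality of $q$ was used. Let $d<0$ be the square-free part of $1-p^n$, so that $\mathbb{Q}(\sqrt{1-p^n})=\mathbb{Q}(\sqrt{d})=:K$, let $m\geq1$ satisfy $1-p^n=m^2d$, and put $\alpha=1+m\sqrt{d}\in\mathcal{O}_K$. Then $N(\alpha)=1-m^2d=p^n$; since $\alpha+\bar\alpha=2$ with $p$ odd, $\alpha$ and $\bar\alpha$ are coprime, and since $1-p^n\equiv1\pmod p$ the prime $p$ is unramified in $K$, so $(\alpha)=\mathfrak{a}^n$ for a prime ideal $\mathfrak{a}$ above $p$. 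As in the proof of Theorem~\ref{T1} it suffices to show that $[\mathfrak{a}]$ has order $n$ in the ideal class group of $K$, except when $(p,n)=(3,5)$; the case $n=1$ is trivial. If the order is smaller, there is an odd prime $\ell\mid n$ and $\beta\in\mathcal{O}_K$ with $(\alpha)=(\beta)^\ell$, hence $\alpha=u\beta^\ell$ for a unit $u$. If $d\neq-1,-3$ then $u=\pm1$ and $\alpha=(\pm\beta)^\ell=\gamma^\ell$; if $d=-1$, or $d=-3$ with $\ell\geq5$, then $\gcd(\ell,|\mathcal{O}_K^\times|)=1$ so $t\mapsto t^\ell$ is a bijection of the (finite cyclic) unit group and again $\alpha=\gamma^\ell$; and if $d=-3$, $\ell=3$, then $m$ is even (since $p^n=1+3m^2$ is odd) so $\alpha\equiv1\pmod2$ in $\mathbb{Z}[\omega]$, while $N(\beta)=p^{n/3}$ is odd gives $\beta^3\equiv1$ in $(\mathbb{Z}[\omega]/2)^\times\cong\mathbb{Z}/3$, forcing $u\equiv1\pmod2$, i.e.\ $u=\pm1$ and $\alpha=\gamma^3$. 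Thus in every case $\alpha=\gamma^\ell$ with $\gamma\in\mathcal{O}_K$.

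Now I run the case analysis of Proposition~\ref{P2} with $q=1$. If $\gamma=a+b\sqrt{d}$ with $a,b\in\mathbb{Z}$ (which covers the $d\equiv2,3\pmod4$ case and the $d\equiv1\pmod4$ case with $a,b$ both even), comparing the rational parts of $\alpha=\gamma^\ell$ shows $a\mid1$, so $a=\pm1$ and $b\neq0$; taking norms then gives $D_1b^2+1=p^{n/\ell}$ and $D_1m^2+1=p^n$ with $D_1=-d$, two distinct solutions $(|b|,n/\ell)$ and $(m,n)$ of $D_1x^2+1=p^y$. If instead $\gamma=\tfrac{a+b\sqrt{d}}{2}$ with $a,b$ odd, then Proposition~\ref{P1} forces $d\equiv5\pmod8$ and $\ell=3$, and comparing rational parts gives $a(a^2+3b^2d)=8$, so $a=\pm1$: here $a=1$ is impossible since $3b^2d<0$, and $a=-1$ forces $d=-3$, $b=\pm1$, hence $N(\gamma)=1$ and $N(\alpha)=1$, contradicting $N(\alpha)=p^n$. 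So we are always reduced to the integral case, where I apply Theorem~\ref{A1} with $\lambda=1$, $D_2=1$: since $L_k=1$ only for $k=1$, one checks $(D_1,1,p)\notin\mathcal{F}$ (it would force $p=F_2=1$), and $D_2=1$ rules out $\mathcal{G}_1$ and $\mathcal{H}_1$ (there $4p^r-1\geq3$, resp.\ $3D_1s^2\mp1\neq1$); while $(1,D_1,1,p)\in\mathcal{E}$ exactly for $(D_1,p)=(2,3)$, i.e.\ $d=-2$, $p=3$. Hence, unless $(p,d)=(3,-2)$, Theorem~\ref{A1} forces the two solutions to coincide, a contradiction; therefore $[\mathfrak{a}]$ has order $n$ and $n\mid h_K$.

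Finally suppose $(p,d)=(3,-2)$. Then $1-3^n=-2m^2$, i.e.\ $\tfrac{3^n-1}{3-1}=m^2$, so Theorem~\ref{TE} gives $n=5$ (as $n>1$ is odd); in this case $K=\mathbb{Q}(\sqrt{-2})$ has class number $1$, not divisible by $5$, which is exactly the asserted exception, while for every other pair $(p,n)$ the previous paragraph applies. I expect the main obstacle to be the careful bookkeeping of the exceptional families $\mathcal{E},\mathcal{F},\mathcal{G}_\lambda,\mathcal{H}_\lambda$ of Theorem~\ref{A1} together with the larger unit groups when $d\in\{-1,-3\}$ (the $d=-3$, $\ell=3$ reduction being the subtlest new point) — that is, confirming that no configuration other than $(p,n)=(3,5)$ escapes the Diophantine obstruction.
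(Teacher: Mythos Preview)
Your proof is correct and follows precisely the strategy the paper sketches for Theorem~\ref{T4}: transplant the argument of Proposition~\ref{P2}/Theorem~\ref{T1} to the value $q=1$ and invoke Ljunggren's Theorem~\ref{TE} to pin down the lone exception $(p,n)=(3,5)$. Your treatment in fact supplies details the paper omits, notably the unit-group analysis for $d\in\{-1,-3\}$ (which the hypothesis $q\not\equiv\pm1\pmod{|d|}$ in Theorem~\ref{T1} was designed to sidestep but which is unavailable when $q=1$) and the explicit exclusion of the exceptional sets $\mathcal{E},\mathcal{F},\mathcal{G}_1,\mathcal{H}_1$ of Theorem~\ref{A1} when $D_2=1$.
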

Theorem \ref{T4} alternatively follows from the work of T. Nagell (Theorem 25 in \cite{NA55})  which was elucidated by J. H. E. Cohn (Corollary 1 in \cite{CO03}). M. R. Murty gave a proof of Theorem \ref{T4} under condition either \enquote{$1-p^n$ is square-free with $n>5$} or \enquote{$m<p^{n/4}/2^{3/2}$ whenever $m^2\mid 1-p^n$ for some integer $m$ with odd $n>5$} (Theorem 1 and Theorem 2 in \cite{RM97}, also see 
\cite{RM99}).

Now we give some demonstration for Theorem \ref{T1}. All the computations in this paper were done using PARI/GP (version 2.7.6). Table 1 gives the list of imaginary quadratic fields $K_{p,q}$ corresponding to $n=3$, $p \leq 19$ (and hence discriminant not exceeding $19^3$). Note that the list does not exhaust all the imaginary quadratic fields $K_{p,q}$ of discriminant not exceeding $19^3$. Table 2 is the list of $K_{p,q}$ for $n=5$ and $p \leq 7$.\\
\begin{center}
\begin{longtable}{|l|l|l|l|l|l|l|l|l|l|}
\caption{Numerical examples of Theorem 1 for $n=3$.} \label{tab:long1} \\

\hline \multicolumn{1}{|c|}{$p$} & \multicolumn{1}{c|}{$q$} & \multicolumn{1}{c|}{$q^2-p^n$}& \multicolumn{1}{c|}{$d$} & \multicolumn{1}{c|}{$h(d)$}& \multicolumn{1}{|c|}{$p$} & \multicolumn{1}{c|}{$q$} & \multicolumn{1}{c|}{$q^2-p^n$}& \multicolumn{1}{c|}{$d$} & \multicolumn{1}{c|}{$h(d)$}\\ \hline 
\endfirsthead

\multicolumn{10}{c}%
{{\bfseries \tablename\ \thetable{} -- continued from previous page}} \\
\hline \multicolumn{1}{|c|}{$p$} & \multicolumn{1}{c|}{$q$} & \multicolumn{1}{c|}{$q^2-p^n$}& \multicolumn{1}{c|}{$d$} & \multicolumn{1}{c|}{$h(d)$}& \multicolumn{1}{|c|}{$p$} & \multicolumn{1}{c|}{$q$} & \multicolumn{1}{c|}{$q^2-p^n$}& \multicolumn{1}{c|}{$d$} & \multicolumn{1}{c|}{$h(d)$}\\ \hline 
\endhead
\hline \multicolumn{10}{|r|}{{Continued on next page}} \\ \hline
\endfoot
\hline 
\endlastfoot
 3&5&-2&-2&1*& 5&3&-116&-29&6\\
 5&7&-76&-19&1**& 7&3&-334&-334&12\\
 7&5&-318&-318&12&7&11&-222&-222&12\\
 7&13&-174&-174&12& 7&17&-54&-6&2*\\
 11&3&-1322&-1322&42&11&5&-1306&-1306&18\\
 11&7&-1282&-1282&12&11&13&-1162&-1162&12\\
 11&17&-1042&-1042&12&11&19&-970&-970&12\\
 11&23&-802&-802&12&11&29&-490&-10&2*\\
 11&31&-370&-370&12&11&37&-38&-38&6*\\
 13&3&-2188&-547&3&13&5&-2172&-543&12\\
 13&7&-2148&-537&12&13&11&-2076&-519&18\\
 13&17&-1908&-53&6&13&19&-1836&-51&2**\\
 13&23&-1668&-417&12&13&29&-1356&-339&6\\
 13&31&-1236&-309&12&13&37&-828&-23&3\\
 13&41&-516&-129&12&13&43&-348&-87&6\\
 13&47&-12&-3&1*&17&3&-4904&-1226&42\\
 17&5&-4888&-1222&12&17&7&-4864&-19&1**\\
 17&11&-4792&-1198&12&17&13&-4744&-1186&24\\
 17&19&-4552&-1138&12&17&23&-4384&-274&12\\
 17&29&-4072&-1018&18&17&31&-3952&-247&6\\
 17&37&-3544&-886&18&17&41&-3232&-202&6\\
 17&43&-3064&-766&24&17&47&2704&-1&1*\\
 17&53&-2104&-526&12&17&59&-1432&-358&6\\
 17&61&-1192&-298&6&17&67&-424&-106&6\\
 19&3&-6850&-274&12&19&5&-6834&-6834&48\\
 19&7&-6810&-6810&48&19&11&-6738&-6738&48\\
 19&13&-6690&-6690&72&19&17&-6570&-730&12\\
 19&23&-6330&-6330&48&19&29&-6018&-6018&48\\
 19&31&-5898&-5898&48&19&37&-5490&-610&12\\
 19&41&-5178&-5178&48&19&43&-5010&-5010&48\\
 19&47&-4650&-186&12&19&53&-4050&-2&1*\\
 19&59&-3378&-3378&24&19&61&-3138&-3138&24\\
 19&67&-2370&-2370&24&19&71&-1818&-202&6\\
 19&73&-1530&-170&12&19&79&-618&-618&12\\
 \end{longtable}
\end{center}

\begin{center}
\begin{longtable}{|l|l|l|l|l|l|l|l|l|l|}
\caption{Numerical examples of Theorem 1 for $n=5$.} \label{tab:long2} \\

\hline \multicolumn{1}{|c|}{$p$} & \multicolumn{1}{c|}{$q$} & \multicolumn{1}{c|}{$q^2-p^n$}& \multicolumn{1}{c|}{$d$} & \multicolumn{1}{c|}{$h(d)$}& \multicolumn{1}{|c|}{$p$} & \multicolumn{1}{c|}{$q$} & \multicolumn{1}{c|}{$q^2-p^n$}& \multicolumn{1}{c|}{$d$} & \multicolumn{1}{c|}{$h(d)$}\\ \hline 
\endfirsthead

\multicolumn{10}{c}%
{{\bfseries \tablename\ \thetable{} -- continued from previous page}} \\
\hline \multicolumn{1}{|c|}{$p$} & \multicolumn{1}{c|}{$q$} & \multicolumn{1}{c|}{$q^2-p^n$}& \multicolumn{1}{c|}{$d$} & \multicolumn{1}{c|}{$h(d)$}& \multicolumn{1}{|c|}{$p$} & \multicolumn{1}{c|}{$q$} & \multicolumn{1}{c|}{$q^2-p^n$}& \multicolumn{1}{c|}{$d$} & \multicolumn{1}{c|}{$h(d)$}\\ \hline 
\endhead
\hline \multicolumn{10}{|r|}{{Continued on next page}} \\ \hline
\endfoot
\hline 
\endlastfoot
 3&5&-218&-218&10&3&7&-194&-194&20\\
 3&11&-122&-122&10&3&13&-74&-74&10\\
 5&3&-3116&-779&10&5&7&-3076&-769&20\\
 5&11&-3004&-751&15&5&13&-2956&-739&5\\
 5&17&-2836&-709&10&5&19&-2764&-691&5\\
 5&23&-2596&-649&20&5&29&-2284&-571&5\\
 5&31&-2164&-541&5&5&37&-1756&-439&15\\
 5&41&-1444&-1&1*&5&43&-1276&-319&10\\
 5&47&-916&-229&10&5&53&-316&-79&5\\
 7&3&-16798&-16798&60&7&5&-16782&-16782&100\\
 7&11&-16686&-206&20&7&13&-16638&-16638&80\\
 7&17&-16518&-16518&60&7&19&-16446&-16446&100\\
 7&23&-16278&-16278&80&7&29&-15966&-1774&20\\
 7&31&-15846&-15846&160&7&37&-15438&-15438&80\\
 7&41&-15126&-15126&120&7&43&-14958&-1662&20\\
 7&47&-14598&-1622&30&7&53&-13998&-13998&100\\
 7&59&-13326&-13326&100&7&61&-13086&-1454&60\\
 7&67&-12318&-12318&60&7&71&-11766&-11766&120\\
 7&73&-11478&-11478&60&7&79&-10566&-1174&30\\
 7&83&-9918&-1102&20&7&89&-8886&-8886&60\\
 7&97&-7398&-822&20&7&101&-6606&-734&40\\
 7&103&-6198&-6198&40&7&107&-5358&-5358&40\\
 7&109&-4926&-4926&40&7&113&-4038&-4038&60\\
 7&127&-678&-678&20&&&&& \\
\end{longtable}
\end{center}

In both the tables we use $*$ in the column for class number to indicate the failure of condition \enquote{$q\not\equiv \pm 1 \pmod{|d|}$} of Theorem \ref{T1}. Appearance of $**$ in the column for class number indicates that both the conditions \enquote{$q\not\equiv \pm 1 \pmod{|d|}$ and $p^{n/3}\ne (2q+1)/3, (q^2+3)/3$} fail to hold. 
For $n=3$, the number of imaginary quadratic number fields obtained from the family provided by T. Nagell (namely $K_{t,1}$ with $t$ any odd integer) with class number divisible by $3$ and discriminant not exceeding $19^3$ are at most $9$, whereas, in Table 1 there are 59 imaginary quadratic fields $K_{p,q}$ with class number divisible by $3$ and discriminant not exceeding $19^3$ (Table 1 does not exhaust all such $K_{p,q}$). Out of these 59 fields in Table 1, the conditions of Theorem~\ref{T1} hold for 58. This phenomenon holds for all values of $n$. 

\noindent\textbf{Acknowledgements.}
The third and fourth authors would like to appreciate the hospitality provided by Harish-Chandra Research Institute, Allahabad, where the main part of the work was done. The authors would like to thank the anonymous referee for valuable comments to improve the presentation of this paper.

\end{document}